\definecolor{verylight}{gray}{0.97}
\definecolor{light}{gray}{0.9}
\definecolor{medium}{gray}{0.85}
\definecolor{dark}{gray}{0.6}
\def\frk{\frak}               % font for "Fraktur"
\def\Phi{{\frk n}}
\def\Phi{{\frk N}}
\def\opn#1#2{\def#1{\operatorname{#2}}} % to make operators
\opn\chara{char} \opn\length{\ell} \opn\pd{pd} \opn\rk{rk}
\opn\projdim{proj\,dim} \opn\injdim{inj\,dim} \opn\rank{rank}
\opn\depth{depth} \opn\grade{grade} \opn\height{height}
\opn\embdim{emb\,dim} \opn\codim{codim}
\opn\Tr{Tr} \opn\bigrank{big\,rank}
\opn\superheight{superheight}\opn\lcm{lcm}
\opn\trdeg{tr\,deg}%\emph{
\opn\reg{reg} \opn\lreg{lreg} \opn\ini{in} \opn\lpd{lpd}
\opn\size{size} \opn\sdepth{sdepth}
\opn\link{link}\opn\fdepth{fdepth}\opn\lex{lex}
\opn\div{div} \opn\Div{Div} \opn\cl{cl} \opn\Cl{Cl}
\opn\Spec{Spec} \opn\Supp{Supp} \opn\supp{supp} \opn\Sing{Sing}
\opn\Ass{Ass} \opn\Min{Min}\opn\Mon{Mon}
\opn\Ann{Ann} \opn\Rad{Rad} \opn\Soc{Soc}
\opn\Im{Im} \opn\Ker{Ker} \opn\Coker{Coker} \opn\Am{Am}
\opn\Hom{Hom} \opn\Tor{Tor} \opn\Ext{Ext} \opn\End{End}
\opn\Aut{Aut} \opn\id{id}
\opn\nat{nat}
\opn\pff{pf}%   \pf exists already
\opn\Pf{Pf} \opn\GL{GL} \opn\SL{SL} \opn\mod{mod} \opn\ord{ord}
\opn\Gin{Gin} \opn\Hilb{Hilb}\opn\sort{sort}
\opn\aff{aff} \opn\con{conv} \opn\relint{relint} \opn\st{st}
\opn\lk{lk} \opn\cn{cn} \opn\core{core} \opn\vol{vol}
\opn\link{link} \opn\star{star}\opn\lex{lex}\opn\set{set}
\opn\gr{gr}
\def\pot#1#2{#1[\kern-0.28ex[#2]\kern-0.28ex]}
\opn\dirlim{\underrightarrow{\lim}}
\opn\inivlim{\underleftarrow{\lim}}
\let\iso=\cong
\def\Implies{\ifmmode\Longrightarrow \else
        \unskip${}\Longrightarrow{}$\ignorespaces\fi}
\def\implies{\ifmmode\Rightarrow \else
        \unskip${}\Rightarrow{}$\ignorespaces\fi}
\def\iff{\ifmmode\Longleftrightarrow \else
        \unskip${}\Longleftrightarrow{}$\ignorespaces\fi}
\newtheorem{Theorem}{Theorem}[section]
\newtheorem{Lemma}[Theorem]{Lemma}
\newtheorem{Corollary}[Theorem]{Corollary}
\newtheorem{Proposition}[Theorem]{Proposition}
\newtheorem{Remark}[Theorem]{Remark}
\let\epsilon\varepsilon
\let\kappa=\varkappa
\def\qed{\ifhmode\textqed\fi
      \ifmmode\ifinner\quad\qedsymbol\else\dispqed\fi\fi}
\def\textqed{\unskip\nobreak\penalty50
       \hskip2em\hbox{}\nobreak\hfil\qedsymbol
       \parfillskip=0pt \finalhyphendemerits=0}
\def\dispqed{\rlap{\qquad\qedsymbol}}
\opn\dis{dis}
\def\pnt{{\raise0.5mm\hbox{\large\bf.}}}
\opn\Lex{Lex}
\begin{document}

\title {Ideals generated by diagonal $2$-minors}

\author {Viviana Ene and Ayesha Asloob Qureshi }

\address{Faculty of Mathematics and Computer Science, Ovidius University, Bd.\ Mamaia 124,
 900527 Constanta, Romania} \email{vivian@univ-ovidius.ro}

\address{Abdus Salam School of Mathematical Sciences, GC University,
Lahore. 68-B, New Muslim Town, Lahore 54600, Pakistan} \email{ayesqi@gmail.com}

\begin{abstract}
With a simple graph $G$ on $[n]$, we associate a binomial ideal $P_G$ generated by diagonal minors of an $n \times n$  matrix $X=(x_{ij})$ of variables. We show that for any graph $G$, $P_G$ is a  prime complete intersection ideal and determine the divisor class group of $K[X]/ P_G$. By using these ideals, one may find a normal domain with free divisor class group of any given rank.
\end{abstract}

\thanks{The first author was supported by the grant UEFISCDI,  PN-II-ID-PCE- 2011-3-1023.}
\subjclass{13C20, 13P10, 13C40}
\keywords{Binomial ideals, Gr\"obner bases, Graphs, Divisor class groups}
\maketitle

\section*{Introduction}

Classically, with a simple graph $G$ on the vertex set $[n]$, one associates the so-called edge ideal $I(G)$ in the polynomial ring $K[x_1, \ldots, x_n]$ over a field $K$. Recently, binomial edge ideals have been considered in \cite{HHH} and, independently, in \cite{O}. The binomial edge ideal $J_G$ of $G$ is generated by the binomials $ x_i y_j - x_j y_i \in K[x_1, \ldots, x_n, y_1,\ldots,y_n]$, where $\{i,j\}$ is an edge of $G$. For instance, the ideal of all $2$-minors of a $2 \times n$ matrix of variables is a special example of a binomial edge ideal.

The study of the ideals of $2$-minors of  matrices of variables is motivated by their relevance in algebraic statistics and other fields as it was shown in \cite{DES}. For more recent results on ideals generated by $2$-minors one may consult \cite{S}, \cite{HH}, \cite{AAQ}.

In this paper, we introduce a new class of  ideals of $2$-minors associated with graphs.
Let $X=(x_{ij})$ be an  $n\times n$-matrix of variables and $S=K[X]$ the polynomial ring over a field $K$ in the variables $\{x_{ij}\}_{1\leq i,j\leq n}.$ Let $G$ be a simple graph on the vertex set $[n].$ With this graph we associate an ideal generated by  diagonal  $2$-minors of $X$ in the following way. For   $1\leq i<j\leq n$  we denote by  $f_{ij}$ the diagonal $2$-minor of $X$ given by the elements at the intersections of the rows $i,j$ and the columns $i,j$, that is, $f_{ij}=x_{ii}x_{jj}-x_{ij}x_{ji}.$ Let $P_G$ be the ideal of $S$ generated by the binomials $f_{ij}$ where $\{i,j\}$ is an edge of $G$.

With respect to the lexicographical order on $S$ induced by the natural order of variables, namely $x_{11} > x_{12} > \cdots > x_{1n}>x_{21}> \cdots > x_{nn}$, the reduced Gr\"obner basis consists of binomials of degree at most 4 which have squarefree initial monomials, as we show in Theorem~\ref{degree4}. But if we consider the reverse lexicographical order induced by the natural order of the variables,  then the generators of $P_G$ form a Gr\"obner basis of $P_G$ and moreover, they form even a regular sequence, therefore $\height (P_G)$ equals  the number of edges of $G$ and $\ini_\prec(P_G)$ is squarefree. Here $\prec$ denotes the reverse lexicographic order.

We show in Proposition~\ref{prime} that $P_G$ is a prime ideal, hence the ring $R_G = S/P_G$ is a normal domain.

In the last section we study the divisor class group $\Cl(R_G)$. We show in Theorem~\ref{classgroup} that $\Cl(R_G)$ is free and we express its rank in terms of the graph's data. Finally, in Proposition~\ref{bound}, we give sharp bounds for the possible rank of $\Cl(R_G)$ when $G$ has a given number of edges. Every abelian group is the class group of a Krull domain as shown by Claborn \cite{Cla}.  By using ideals generated by diagonal $2$-minors, one may find an example of a normal domain with free divisor class group of any given rank.

\section{Gr\"obner bases of ideals generated by diagonal $2$-minors} \label{GbasisDiag}
Let $X=(x_{ij})_{1\leq i,j\leq n}$ be a square matrix of variables and $S=K[X]$ the polynomial ring over a field $K.$ For any $1\leq i< j\leq n,$ let $f_{ij}=x_{ii}x_{jj}-x_{ij}x_{ji}.$
Let $G$ be a  simple graph on the vertex set $[n]$ and  $P_G=(f_{ij}: \{i,j\}\in E(G))$ where $E(G)$ is the set of edges of $G.$ In this section we  compute the Gr\"obner bases of  $P_G$ with respect to the lexicographic  and the reverse lexicographic order on $S$.

Usually, when we study ideals generated by minors of  matrices of variables, one uses the lexicographic order induced by the natural order of variables, namely, row by row from left to right. This monomial order selects as initial monomial of each $2$-minor of $X$ the product of variables of the main diagonal. It will turn out that this order gives a rather large Gr\"obner basis for $P_G$. But if we consider a monomial order on $S$ which selects the product of the variables on the anti-diagonal of each $2$-minor as initial monomial, then the generators of $P_G$ form a Gr\"obner basis. More precisely, let us consider the reverse lexicographic order on the ring $S$.
 With respect to this order, $\ini_{\prec}f_{ij}=x_{ij}x_{ji}$, that is, the initial monomial comes from the anti-diagonal of the minor $f_{ij}$ for any $1\leq i< j\leq n$.

\begin{Proposition}
Let $G$ be a simple graph on the vertex set $[n]$ with the edge set $E(G)$ and let $P_G=(f_{ij}: \{i,j\}\in E(G))$ be the binomial ideal generated by the diagonal $2$-minors
associated with $G$. Then the set of generators of $P_G$ is the reduced Gr\"obner basis of $P_G$ with respect to the reverse lexicographic order.
Moreover, $P_G$ is a complete intersection of $\height(P_G)=|E(G)|.$
\end{Proposition}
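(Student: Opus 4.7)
The plan is to first identify the initial monomials of the $f_{ij}$ under the reverse lexicographic order and then exploit the fact that these initial monomials are pairwise coprime.

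First, I would verify that $\ini_\prec f_{ij} = x_{ij}x_{ji}$ for every $1 \le i < j \le n$. Under revlex with $x_{11} > x_{12} > \cdots > x_{nn}$, the smallest variable among $\{x_{ii}, x_{jj}, x_{ij}, x_{ji}\}$ is $x_{jj}$ (since $j > i$ and $j > i$ place it latest in the variable order). Comparing the two monomials $x_{ii}x_{jj}$ and $x_{ij}x_{ji}$, the variable $x_{jj}$ appears with exponent $1$ in the former and with exponent $0$ in the latter; by definition of revlex, the monomial with the smaller exponent at the smallest-indexed differing variable is larger, so $x_{ij}x_{ji} \succ x_{ii}x_{jj}$.

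Next, I would observe that for distinct edges $\{i,j\} \neq \{k,l\}$ of $G$, the four variables $x_{ij}, x_{ji}, x_{kl}, x_{lk}$ are pairwise distinct, so the initial monomials $\{x_{ij}x_{ji} : \{i,j\} \in E(G)\}$ are pairwise coprime. Buchberger's criterion then implies immediately that every $S$-polynomial among the $f_{ij}$ reduces to zero, so the $f_{ij}$ form a Gr\"obner basis of $P_G$ with respect to $\prec$. For reducedness, it suffices to note that the trailing term $x_{ii}x_{jj}$ of $f_{ij}$ involves only diagonal variables $x_{kk}$, while every leading monomial $x_{kl}x_{lk}$ involves the off-diagonal variable $x_{kl}$ with $k \neq l$, so no leading monomial divides any tail, confirming the Gr\"obner basis is already reduced.

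For the complete intersection statement, the initial ideal $\ini_\prec(P_G) = (x_{ij}x_{ji} : \{i,j\} \in E(G))$ is generated by pairwise coprime squarefree monomials, which therefore form a regular sequence; hence $\height(\ini_\prec(P_G)) = |E(G)|$. Since heights are preserved under passage to the initial ideal, $\height(P_G) = |E(G)|$, and because $P_G$ is generated by exactly $|E(G)|$ elements attaining this height, $P_G$ is a complete intersection. The main thing to be careful about, and essentially the only nontrivial point, is the revlex comparison in the first step; everything else follows by standard coprime-leading-term arguments.
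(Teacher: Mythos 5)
Your proposal is correct and follows essentially the same route as the paper, which simply observes that the initial monomials $x_{ij}x_{ji}$ form a regular sequence (equivalently, are pairwise coprime) and lets all claims follow; you have merely spelled out the revlex comparison, the Buchberger argument, the reducedness check, and the height transfer that the paper leaves implicit.
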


\begin{proof}
All claims follow immediately if we notice that the initial monomials of $f_{ij}$ with respect to $\prec$ form a regular sequence.
\end{proof}

We now consider the lexicographic order induced by the natural order of variables, namely,
\[
x_{11} > x_{1 2} > \cdots >  x_{1n} > x_{21} > x_{2 2}> \cdots > x_{2n} > \cdots > x_{n1}> x_{n 2}> \cdots > x_{nn}.
\]
 With respect to this order, $\ini_{<}f_{ij}=x_{ii}x_{jj},$ in other words, the initial monomial comes from the main diagonal of the minor $f_{ij}$.

\begin{Theorem}
\label{degree4}
Let $G$ be a simple graph on the set $[n]$ and let $P_G$ be its associated ideal. The initial ideal of $P_G$ with respect to the lexicographic order induced by the natural order of indeterminates is generated by squarefree monomials of degree at most 4.
\end{Theorem}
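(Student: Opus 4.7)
The plan is to run Buchberger's algorithm on the generators $\{f_{ij}:\{i,j\}\in E(G)\}$ of $P_G$, controlling the leading monomials that appear at each stage. The generators themselves contribute squarefree leading monomials $\ini_<(f_{ij})=x_{ii}x_{jj}$ of degree $2$. For any two generators whose edges are disjoint, the $S$-pair reduces to zero by Buchberger's first criterion, since the leading monomials are coprime. For two generators whose edges share a vertex, a short calculation (split into three subcases according to whether the shared vertex is the smaller endpoint of both edges, the larger of both, or the larger of one and the smaller of the other) shows that $S(f_{ij},f_{kl})$ is a binomial of the form $x_{aa}x_{bc}x_{cb}-x_{dd}x_{ef}x_{fe}$, where $\{a,b,c\}$ and $\{d,e,f\}$ are the same triple of distinct vertices, and that its leading monomial is a squarefree degree-$3$ monomial of shape $x_{aa}x_{bc}x_{cb}$.

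After adjoining these degree-$3$ binomials to the developing Gr\"obner basis, I would next analyze the $S$-pairs of (i) a degree-$3$ element with a generator, and (ii) two degree-$3$ elements. A leading monomial of shape $x_{aa}x_{bc}x_{cb}$ can overlap a degree-$2$ leading monomial $x_{ii}x_{jj}$ only in the diagonal variable, and two degree-$3$ leading monomials can overlap through their diagonal vertex, through their anti-diagonal pair $x_{bc}x_{cb}$, or through both; there are thus only finitely many essentially different overlap patterns. A systematic case analysis shows that each such $S$-pair either reduces to zero modulo the elements already produced, or yields a new binomial whose leading monomial is a squarefree degree-$4$ monomial of the form $x_{ab}x_{ba}x_{cd}x_{dc}$, supported entirely on the off-diagonal variables attached to two edges of $G$.

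It then remains to check that every $S$-pair involving such a degree-$4$ element reduces to zero modulo the Gr\"obner basis already constructed. The structural fact that makes this step work is that a leading monomial $x_{ab}x_{ba}x_{cd}x_{dc}$ contains no diagonal variables and is therefore coprime with every leading monomial $x_{ii}x_{jj}$ of a generator; so by Buchberger's first criterion, only overlaps with other degree-$3$ or degree-$4$ leading monomials must be considered, and in each surviving case the $S$-polynomial factors, after one or two reduction steps, as a monomial multiple of an earlier basis element, reducing to zero.

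The main obstacle I expect is the bookkeeping required in this last step: one must verify carefully that no degree-$5$ or larger leading monomial slips through the reduction process. Once this is done, termination is automatic because there are only finitely many squarefree monomials of degree at most $4$ in $S$. The resulting reduced Gr\"obner basis consists of binomials of degree at most $4$ with squarefree leading monomials, which yields the claim that $\ini_<(P_G)$ is generated by squarefree monomials of degree at most $4$.
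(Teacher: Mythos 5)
Your proposal follows essentially the same route as the paper: apply Buchberger's criterion degree by degree, observe that overlapping generators produce degree-$3$ binomials with squarefree leading monomials of shape $x_{aa}x_{bc}x_{cb}$, that these in turn produce degree-$4$ binomials with leading monomials of shape $x_{ab}x_{ba}x_{cd}x_{dc}$, and that all remaining $S$-pairs reduce to zero. Your observation that the degree-$4$ leading monomials involve only off-diagonal variables and are therefore coprime to the $x_{ii}x_{jj}$ is a nice way to dispose of one family of $S$-pairs, but the argument is otherwise the paper's own, including the deferral of the final reductions to a careful but routine case check.
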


\begin{proof}
We compute a Gr\"obner basis $\mathcal{G}_{\lex}$ of $P_G$ by applying Buchberger's criterion.

We first compute the $S$-polynomials of the generators of $P_G$. Let $f_{ij}$ and $f_{kl}$ be two binomials in the generating set $P_G$. We consider the non trivial case when $\gcd(\ini_{<}(f_{ij}),\ini_{<}(f_{kl})) \neq 1$. We may have one of the following possibilities:
\begin{enumerate}
\item[(i)] $x_{jj}=x_{kk}$ (or $x_{ii} = x_{ll}$),
\item[(ii)] $x_{ii}=x_{kk}$,
\item[(iii)] $x_{jj}=x_{ll}$.
\end{enumerate}

Consider the case (i) as shown in Figure~\ref{case1}. Then, the $S$-polynomial $S(f_{ij}, f_{jl}) = x_{ii} x_{jl} x_{lj} - x_{ll} x_{ij} x_{ji}$ must be added to $\mathcal{G}_{\lex}$, since none of its monomials is divisible by a monomial of the form $x_{aa} x_{bb}$.

\begin{figure}[hbt]
\begin{center}
\psset{unit=0.6cm}
\begin{pspicture}(4.5,-1.5)(4.5,3)
\rput(-8.5,0)
{
\pspolygon(5,1)(5,2.75)(7,2.75)(7,1)
\pspolygon(7,-0.75)(7,1)(9,1)(9,-0.75)
\psline[linestyle=dashed](5,2.75)(7,-0.75)
\psline[linestyle=dashed](5,2.75)(9,1)
\psline[linestyle=dashed](9,1)(7,-0.75)
\rput(4.5,2.75){$x_{ii}$}
\rput(7.35,2.75){$x_{ij}$}
\rput(4.5,1){$x_{ji}$}
\rput(7.35,1.3){$x_{jj}$}
\rput(9.35,1){$x_{jl}$}
\rput(9.35,-1){$x_{ll}$}
\rput(6.5,-1){$x_{lj}$}
\rput(7,-2){case(i)}
}
\rput(-2.5,0)
{
\pspolygon(5,1)(5,2.75)(7,2.75)(7,1)
\pspolygon(5,-0.75)(5,2.75,1)(9,2.75)(9,-0.75)
\psline[linestyle=dashed](7,2.75)(5,1)
\psline[linestyle=dashed](7,2.75)(9,-0.75)
\psline[linestyle=dashed](5,1)(9,-0.75)
\rput(4.5,3){$x_{ii}$}
\rput(7.35,3){$x_{ij}$}
\rput(4.5,1){$x_{ji}$}
\rput(7.4,1){$x_{jj}$}
\rput(9.35,3){$x_{il}$}
\rput(9.35,-1){$x_{ll}$}
\rput(4.5,-1){$x_{li}$}
\rput(7,-2){case(ii)}
}
\rput(4.5,0)
{
\pspolygon(7,-0.75)(9,-0.75)(9,1)(7,1)
\pspolygon(5,-0.75)(5,2.75,1)(9,2.75)(9,-0.75)
\psline[linestyle=dashed](5,2.75)(7,-0.75)
\psline[linestyle=dashed](5,2.75)(9,1)
\psline[linestyle=dashed](9,1)(7,-0.75)
\rput(4.5,2.9){$x_{ii}$}
\rput(9.35,2.9){$x_{ij}$}
\rput(7,1.3){$x_{kk}$}
\rput(7,-1){$x_{jk}$}
\rput(9.4,1){$x_{kj}$}
\rput(9.4,-1){$x_{jj}$}
\rput(4.5,-1){$x_{ji}$}
\rput(7,-2){case(iii)}
}
\end{pspicture}
\end{center}
\caption{}\label{case1}
\end{figure}

For case (ii), see Figure~\ref{case1}, we may assume that, for instance, $j<l$, and we get $S(f_{ij}, f_{il}) = x_{jj} x_{il} x_{li} -  x_{ll} x_{ij} x_{ji}$ that should also be added to $\mathcal{G}_{\lex}$. Moreover, $\ini_< (S(f_{ij}, f_{il})) = x_{ll} x_{ij} x_{ji}$.

In case (iii), we may choose $i<k$, and get $S(f_{ij}, f_{kj}) = x_{ii} x_{jk} x_{kj} - x_{kk} x_{ji} x_{ij}$ in  $\mathcal{G}_{\lex}$, with $\ini_< (S(f_{ij}, f_{kj})) = x_{ii} x_{jk} x_{kj}$; see Figure~\ref{case1}.

By the above computation of $S$-polynomials, we have got all the binomials of degree 3 which belong to the Gr\"obner basis of $P_G$. Next we investigate the $S$-polynomials $S(g,f_{ij})$, where $g$ is a binomial of degree 3 and $f_{ij}$ is a quadratic binomial. We are going to discuss only those cases when the $S$-polynomial does not reduce to 0, and hence contributes to the Gr\"obner basis.

Case 1: Let $g= x_{ii} x_{jl} x_{lj} - x_{ll} x_{ij} x_{ji}$ with $i<j<l$ and $f_{iq}=x_{ii}x_{qq} - x_{qi}x_{iq}$ with $i<q$. We have $S(f_{iq},g) = x_{jl} x_{lj} x_{qi} x_{iq}- x_{qq} x_{ll} x_{ij} x_{ji}$. If $\ini_< (S(g, f_{iq}))$ is the first monomial, that is, $q<j$, then we add $S(g, f_{iq})$ to $\mathcal{G}_{\lex}$ since $x_{jl} x_{lj} x_{qi} x_{iq}$ is not divisible by any of the previous initial monomials which we have obtained so far; see Figure~\ref{case11}.

\begin{figure}[hbt]
\begin{center}
\psset{unit=1cm}
\begin{pspicture}(4.5,0)(4.5,3)
\rput(-2,0)
{
\pspolygon(5,1)(5,3)(7,3)(7,1)
\pspolygon(7,0)(7,1)(8,1)(8,0)
\pspolygon(5,3)(5,2)(6,2)(6,3)
\psline[linestyle=dashed](6,3)(5,2)
\psline[linestyle=dashed](7,0)(8,1)
\psline[linestyle=dashed](6,3)(8,1)
\psline[linestyle=dashed](5,2)(7,0)
\rput(4.6,3.1){$x_{ii}$}
\rput(7.2,3.1){$x_{ij}$}
\rput(4.6,1){$x_{ji}$}
\rput(7.2,1.15){$x_{jj}$}
\rput(8.2,1){$x_{jl}$}
\rput(8.2,-0.2){$x_{ll}$}
\rput(6.8,-0.2){$x_{lj}$}
\rput(6,3.15){$x_{iq}$}
\rput(4.6,2){$x_{qi}$}
\rput(6,1.85){$x_{qq}$}
}
\end{pspicture}
\end{center}
\caption{}\label{case11}
\end{figure}

Otherwise, that is, for $q>j$, we reduce $S(g, f_{iq})$ modulo a binomial of degree 3, namely the one which appears when we take $S(f_{ij}, f_{iq})$, and get
\[
S(g,f_{iq}) = -x_{ll} S(f_{ij}, f_{iq}) - x_{iq} x_{qi} f_{jl}.
\]

Therefore, $S(g,f_{iq})$ reduces to zero. Now it remains to consider $S(g,f_{qi})$, where $q<i$. By proceeding as before, it follows that $S(g,f_{qi})$ reduces to zero modulo a binomial of degree 3 and a binomial of degree 2.

Case 2: Let $g = x_{ll} x_{ij} x_{ji} - x_{jj} x_{il} x_{li}$ and $f_{lq} = x_{ll} x_{qq}-x_{ql}x_{lq}$, with $l<q$. Then we have $ S(g, f_{lq})= x_{ij} x_{ji} x_{lq} x_{ql} - x_{qq} x_{jj} x_{il} x_{li}$. Since  $\ini_< (S(g, f_{lq}))$ is not divisible by any initial monomial obtained so far, we add this polynomial to our Gr\"obner basis. We get the same conclusion if we take $S(g,f_{ql})$ with $l>q$; see Figure~\ref{case22}.

\begin{figure}[hbt]
\begin{center}
\psset{unit=1cm}
\begin{pspicture}(4.5,-0.5)(4.5,3)
\rput(-4.5,0)
{
\pspolygon(5,1)(5,3)(7,3)(7,1)
\pspolygon(7,0)(7,1)(8,1)(8,0)
\pspolygon(5,3)(5,2)(6,2)(6,3)
\psline[linestyle=dashed](6,3)(5,2)
\psline[linestyle=dashed](7,0)(8,1)
\psline[linestyle=dashed](6,3)(8,1)
\psline[linestyle=dashed](5,2)(7,0)
\rput(4.6,3.1){$x_{ii}$}
\rput(7.2,3.1){$x_{il}$}
\rput(4.6,1){$x_{li}$}
\rput(7.2,1.15){$x_{ll}$}
\rput(8.2,1){$x_{lq}$}
\rput(8.2,-0.2){$x_{qq}$}
\rput(6.8,-0.2){$x_{ql}$}
\rput(6,3.15){$x_{ij}$}
\rput(4.6,2){$x_{ji}$}
\rput(6,1.85){$x_{jj}$}
\rput(6.5,-0.75){$l<q$}
}
\rput(0.5,0)
{
\pspolygon(5,0)(5,3)(8,3)(8,0)
\pspolygon(7,0)(7,1)(8,1)(8,0)
\pspolygon(5,3)(5,2)(6,2)(6,3)
\psline[linestyle=dashed](6,3)(5,2)
\psline[linestyle=dashed](7,0)(8,1)
\psline[linestyle=dashed](6,3)(8,1)
\psline[linestyle=dashed](5,2)(7,0)
\rput(4.6,3.1){$x_{ii}$}
\rput(8.2,3.1){$x_{il}$}
\rput(4.6,-0.2){$x_{li}$}
\rput(6.95,1.15){$x_{qq}$}
\rput(8.2,1){$x_{ql}$}
\rput(8.2,-0.2){$x_{ll}$}
\rput(6.85,-0.2){$x_{lq}$}
\rput(6,3.15){$x_{ij}$}
\rput(4.6,1.9){$x_{ji}$}
\rput(6,1.85){$x_{jj}$}
\rput(6.5,-0.75){$q<l$}
}
\end{pspicture}
\end{center}
\caption{}\label{case22}
\end{figure}

Case 3: Let $g = x_{ii} x_{kj} x_{jk} - x_{ij} x_{ji} x_{kk}$ and $f_{iq} = x_{ii} x_{qq} - x_{iq}x_{qi}$ with $i<q$. Then, we have $ S(g, f_{iq})= x_{iq} x_{qi} x_{jk} x_{kj}- x_{ij} x_{ji} x_{kk} x_{qq}$. If $q<j$, then $\ini_< (S(g, f_{iq}))$ is a new initial monomial, hence we add $S(g, f_{iq})$ to our Gr\"obner basis; see Figure~\ref{case33}. Otherwise, that is, if $q>j$, then, as we did in Case 1, we observe that $S(g, f_{iq})$ reduces to zero modulo a binomial of degree 3 and a binomial of degree 2.

\begin{figure}[hbt]
\begin{center}
\psset{unit=1cm}
\begin{pspicture}(4.5,0)(4.5,3)
\rput(-2,0)
{
\pspolygon(5,0)(5,3)(8,3)(8,0)
\pspolygon(7,0)(7,1)(8,1)(8,0)
\pspolygon(5,3)(5,2)(6,2)(6,3)
\psline[linestyle=dashed](6,3)(5,2)
\psline[linestyle=dashed](7,0)(8,1)
\psline[linestyle=dashed](6,3)(8,1)
\psline[linestyle=dashed](5,2)(7,0)
\rput(4.6,3.1){$x_{ii}$}
\rput(8.3,3.1){$x_{ij}$}
\rput(4.6,-0.2){$x_{ji}$}
\rput(6.95,1.15){$x_{kk}$}
\rput(8.3,1){$x_{kj}$}
\rput(8.3,-0.2){$x_{jj}$}
\rput(6.85,-0.2){$x_{jk}$}
\rput(6,3.15){$x_{iq}$}
\rput(4.6,1.9){$x_{qi}$}
\rput(6,1.85){$x_{qq}$}
}
\end{pspicture}
\end{center}
\caption{}\label{case33}
\end{figure}

If $q<i$, we may again reduce $S(g,f_{qi})$. So far, apart from the original generators of $P_G$, we have in the Gr\"obner basis with respect to the  lexicographic order, binomials of degree 3 and 4.

Now we discuss the $S$-polynomial of degree 3 binomials. Let $g= x_{ii} x_{jl} x_{lj} - x_{ll} x_{ij} x_{ji}$ and $h=x_{pp} x_{rq} x_{qr} - x_{rr} x_{pq} x_{qp}$ be any two binomial of the form that we obtained by computing $S$-polynomials of the generators of $P_G$. If $\gcd(\ini_{<}(g),\ini_{<}(h)) \neq 1$, then we either have $x_{ii} = x_{pp}$ or $\{ x_{jl}, x_{lj} \} = \{ x_{qr}, x_{rq} \}$. In the first case, we obtain a binomial of degree 5 which reduces to 0, and in the second case, $S(f,g)$ is a binomial of degree 4 which is also reducible. To understand this, consider the following example. Let $f=x_{qp} x_{pq} x_{rr} - x_{qq} x_{rp} x_{pr}$ and $g = x_{ii} x_{jl} x_{lj} - x_{ll} x_{ij} x_{ji}$ where $f$ and $g$ are obtained as in case (i) and case(ii) of Figure~\ref{case1}, respectively.

\begin{figure}[hbt]
\begin{center}
\psset{unit=1cm}
\begin{pspicture}(4.5,-1)(4.5,3)
\rput(-2.5,0)
{
\pspolygon(5,1)(5,3)(7,3)(7,1)
\pspolygon(7,0)(7,1)(8,1)(8,0)
\pspolygon(8,-1)(8,0)(9,0)(9,-1)
\pspolygon(5,3)(5,2)(6,2)(6,3)
\rput(4.6,3.1){$x_{pp}$}
\rput(7.2,3.1){$x_{pr}$}
\rput(4.6,1.15){$x_{rp}$}
\rput(7.2,1.15){$x_{rr}$}
\rput(8.2,1.15){$x_{rj}$}
\rput(8.2,0.18){$x_{jj}$}
\rput(6.6,0.18){$x_{jr}$}
\rput(6,3.15){$x_{pq}$}
\rput(4.6,2){$x_{qp}$}
\rput(6,1.85){$x_{qq}$}
\rput(9.2,0.18){$x_{jl}$}
\rput(7.8,-1.1){$x_{lj}$}
\rput(9.2,-1.1){$x_{ll}$}
%\rput(6.5,-0.75){$l<q$}
}
\end{pspicture}
\end{center}
\caption{}\label{example}
\end{figure}

If we let $x_{ii} = x_{rr}$, then $S(f,g)$ reduces to 0 with respect to $x_{qp} x_{pq} x_{jr} x_{rj} - x_{qq} x_{jj} x_{rp} x_{pr}$ and $x_{ll} x_{jj} - x_{jl} x_{lj}$; see Figure~\ref{example}. If $ x_{jl} = x_{pq}$ then $S(f,g)$ is the product of $x_{qq}$ and $x_{ii} x_{rp} x_{pr} - x_{rr} x_{ip} x_{pi}$.

By a careful computation of $S$-polynomials in the cases when we consider $S$-polynomials of degree 4 binomials with binomials of degree 2, 3 and 4, we see that they reduce to 0 and hence $\mathcal{G}_{\lex}$ consists of binomials with squarefree initial term of degree at most 4.
\end{proof}

\begin{Proposition} \label{prime}
$P_G$ is a prime ideal, thus $S/P_G$ is a domain.
\end{Proposition}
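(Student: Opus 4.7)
The plan is to induct on the number of edges $m = |E(G)|$. When $m = 0$, $P_G = 0$ and $S/P_G = S$ is a polynomial ring, hence a domain. For the inductive step, pick an edge $e = \{i,j\} \in E(G)$ and set $G' = G \setminus \{e\}$, so that $P_{G'}$ is prime by induction. The crucial observation is that the variables $x_{ij}$ and $x_{ji}$ do not appear in any generator of $P_{G'}$: each $f_{kl}$ with $\{k,l\} \in E(G')$ involves only $x_{kk}, x_{ll}, x_{kl}, x_{lk}$, and these are disjoint from $\{x_{ij}, x_{ji}\}$ unless $\{k,l\} = \{i,j\}$. Letting $S_0$ be the polynomial subring of $S$ in all variables except $x_{ij}$ and $x_{ji}$, we have $P_{G'} \subset S_0$, and with $T = S_0/P_{G'}$ we obtain
\[
S/P_{G'} \cong T[x_{ij}, x_{ji}], \qquad S/P_G \cong T[x_{ij}, x_{ji}]/(x_{ij} x_{ji} - x_{ii} x_{jj}).
\]
Since $T$ embeds into the domain $S/P_{G'}$, it is itself a domain.

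The heart of the argument is then the following ring-theoretic lemma: if $A$ is an integral domain and $c \in A$ is nonzero, then $B := A[X, Y]/(XY - c)$ is a domain. I would prove it by first reducing every element of $B$ to the normal form $\sum_{a \geq 0} r_a X^a + \sum_{b \geq 1} s_b Y^b$ using the relation $XY = c$, and then checking that the $A$-algebra map $B \to A[X, X^{-1}]$ sending $Y \mapsto c X^{-1}$ is injective on this normal form; injectivity relies on $c$ being nonzero in the domain $A$, so that $s_b c^b = 0$ forces $s_b = 0$. Since $B$ thereby injects into the domain $A[X, X^{-1}]$, it is itself a domain.

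To apply the lemma with $A = T$ and $c = x_{ii} x_{jj}$, one must verify $x_{ii} x_{jj} \neq 0$ in $T$, i.e., $x_{ii} x_{jj} \notin P_{G'}$. This is immediate from the homogeneous quadric structure of $P_{G'}$: any degree-$2$ element is a $K$-linear combination of $\{f_{kl} : \{k,l\} \in E(G')\}$, and among the four monomials of any such $f_{kl}$, the monomial $x_{ii} x_{jj}$ appears only when $\{k,l\} = \{i,j\}$, which was deliberately excluded. The main conceptual obstacle is proving the ring-theoretic lemma; the remainder of the proof is forced by the ``disjoint-support'' structure of the generators $f_{ij}$.
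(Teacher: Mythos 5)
Your proof is correct, and although it shares the paper's overall skeleton --- induction on $|E(G)|$, removing one edge $\{i,j\}$ and handling the new relation $f_{ij}$ by passing to a ring in which an antidiagonal variable becomes invertible --- the key technical step is genuinely different. The paper first proves that $x_{ji}$ is a nonzerodivisor on $S/P_G$ by a height computation: it rewrites $(P_G, x_{ji})$ as $(P_{G'}, x_{ii}x_{jj}, x_{ji})$, reads off the height from the squarefree initial ideal with respect to the reverse lexicographic order, and invokes the complete intersection property of $P_G$ from Section 1 to get unmixedness; it then identifies $(S/P_G)_{x_{ji}}$ with a Laurent extension of $S'/P_{G'}$, a domain by induction. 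You instead exhibit the isomorphism $S/P_G \cong T[X,Y]/(XY-c)$ with $T = S_0/P_{G'}$ a domain and $c = x_{ii}x_{jj} \neq 0$, and prove the general lemma that such a ring is a domain via a normal form and an embedding into $T[X,X^{-1}]$. Your embedding is essentially the same localization the paper performs, but proving its injectivity by hand lets you bypass the Gr\"obner basis and complete intersection machinery entirely, so your argument is elementary and self-contained; the small price is having to check $x_{ii}x_{jj} \notin P_{G'}$, which your degree-two argument does correctly (and which also follows at once from the fact that $T$ is a domain in which the variables $x_{ii}$ and $x_{jj}$ are nonzero). The paper's route is shorter on the page because it reuses results already established for $P_G$.
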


\begin{proof}
We may assume that $G$ has no isolated vertices. Let $\{1,i\}$ be an edge of $G$ and let $G'$ be the subgraph of $G$ obtained by removing this edge  from $G$. We first claim that $x_{i1}$ is regular on $S/P_G$. Indeed, we have $(P_G, x_{i1}) = (P_{G'}, x_{11} x_{ii}, x_{i1})$. The height of the ideal $(P_{G'}, x_{11} x_{ii}, x_{i1})$ may be obtained by computing the height of its initial ideal with respect to $\prec$, which is $|E(G')|+2 = |E(G)| +1 = \height (P_G) +1$. Consequently, $\height (P_G, x_{i1}) =\height (P_G) +1$, which shows that $x_{i1}$ is regular on $S/ P_G$ since
$P_G$ is a complete intersection.

Now, the claim of the proposition follows if we show that $(S/P_G)_{x_{i1}}$ is a domain. We have $(S/P_G)_{x_{i1}} \iso (S'/P_{G'})[x_{i1}^{-1}]$, where $S'$ is the polynomial ring in the variables $\{x_{ij} : 1 \leq i,j \leq n\} \setminus \{x_{1i}, x_{i1}\}$. Therefore, the proof is finished by applying induction on the number of edges of $G.$
\end{proof}

\begin{Corollary}
Let $G$ be a simple graph on $[n]$. Then the ring $R_G=S/P_G$ is a normal domain.
\end{Corollary}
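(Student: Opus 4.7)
By Proposition~\ref{prime} the ring $R_G$ is already a domain, so it remains to establish normality, which I plan to derive from Serre's criterion $(R_1)+(S_2)$. Condition $(S_2)$ is immediate: the first Proposition of this section shows that $P_G$ is a complete intersection, so $R_G$ is Cohen--Macaulay and therefore satisfies $(S_k)$ for every $k$.

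For $(R_1)$ I would use the Jacobian criterion. Since $P_G$ is a complete intersection of height $c=|E(G)|$, the singular locus of $R_G$ is defined inside $V(P_G)$ by the ideal of $c\times c$ minors of the Jacobian $J=(\partial f_{ij}/\partial x_{kl})$. The crucial feature is that for an edge $\{i,j\}$ with $i<j$ the variables $x_{ij},x_{ji}$ occur only in $f_{ij}$, so in $J$ the columns indexed by $x_{ij}$ and $x_{ji}$ have a single nonzero entry each, located in the row of $f_{ij}$ and equal to $-x_{ji}$ and $-x_{ij}$, respectively. Selecting, for each edge $e=\{i,j\}$, one of these two columns gives $2^{c}$ choices of $c$ columns, and each such choice yields a diagonal $c\times c$ submatrix whose determinant is, up to sign, a product of $c$ off-diagonal variables. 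If all of these $2^{c}$ minors vanish at a point $P$, then a transversal argument forces some edge $e=\{i,j\}$ with $x_{ij}(P)=x_{ji}(P)=0$; otherwise one could pick the nonzero one in each pair and obtain a nonzero product. Therefore
\[
\Sing(R_G)\subseteq\bigcup_{\{i,j\}\in E(G)} V(P_G,x_{ij},x_{ji})/P_G.
\]

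It remains to verify that for every edge $e=\{i,j\}$ the ideal $(P_G,x_{ij},x_{ji})$ has height $|E(G)|+2$, which is exactly codimension two in $R_G$. Writing $G':=G\setminus\{e\}$, the equality $(P_G,x_{ij},x_{ji})=(P_{G'},x_{ii}x_{jj},x_{ij},x_{ji})$ is immediate. Arguing as in Proposition~\ref{prime}, the variables $x_{ij},x_{ji}$ do not appear in any generator of $P_{G'}$, hence form a regular sequence on $S/P_{G'}$, and $(P_{G'},x_{ij},x_{ji})$ is prime of height $|E(G)|+1$; reducing further modulo all off-diagonal variables, $x_{ii}x_{jj}$ maps to a nonzero element (because $\{i,j\}\notin E(G')$), so $x_{ii}x_{jj}$ is a nonzerodivisor on this prime quotient and adjoining it raises the height to $|E(G)|+2$. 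I expect the main obstacle to be the transversal bookkeeping for the Jacobian submatrices; the height computation is essentially a repackaging of arguments already developed in the proof of Proposition~\ref{prime}.
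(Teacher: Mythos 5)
Your argument is correct, but it takes a genuinely different route from the paper. The paper's proof of this corollary is a one-liner: having established in Proposition~\ref{prime} that $P_G$ is prime, the authors note that $P_G$ is a toric ideal whose initial ideal $\ini_{\prec}(P_G)=(x_{ij}x_{ji}:\{i,j\}\in E(G))$ is squarefree, and invoke Sturmfels' criterion that a toric ring admitting a squarefree initial ideal is normal. You instead verify Serre's conditions directly: $(S_2)$ follows from the complete intersection property, and $(R_1)$ from the Jacobian criterion, exploiting the fact that each off-diagonal variable $x_{ij}$, $x_{ji}$ occurs in exactly one generator, so that among the maximal minors of the Jacobian one finds, for every choice of one anti-diagonal column per edge, a diagonal submatrix whose determinant is up to sign a product of off-diagonal variables, one from each edge pair; this yields $\Sing(R_G)\subseteq\bigcup_{\{i,j\}\in E(G)} V(P_G,x_{ij},x_{ji})/P_G$, and the height computation $\height(P_G,x_{ij},x_{ji})=|E(G)|+2$ (a repackaging of the regularity argument from Proposition~\ref{prime}) shows this locus has codimension two. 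Both steps are sound: the direction of the Jacobian criterion you use (full rank implies regular) holds over any field, and the nonzerodivisor argument for $x_{ii}x_{jj}$ via reduction modulo the off-diagonal variables is valid because $\{i,j\}\notin E(G')$. What the paper's approach buys is brevity, at the cost of citing the normality machinery for initial ideals; what yours buys is a self-contained, elementary argument that additionally locates the singular locus of $R_G$ inside $\bigcup_{\{i,j\}\in E(G)}V(x_{ij},x_{ji})$, information not visible from the Sturmfels argument.
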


\begin{proof}
Since $\ini_{\prec} (P_G)$ is a squarefree monomial ideal, the normality follows by applying a well known criterion of Sturmfels \cite[Chapter 13] {St}.
\end{proof}

\section{The divisor class group} \label{divisor}
Let $G$ be a simple graph on $[n]$ and $R_G=S/P_G.$
In the sequel, we are going to determine the divisor class group of $R_G$. We proceed as in the case of classical determinantal rings, see \cite{BV}, \cite{Co}. Another useful reference on computing  class groups of toric varieties is \cite[Chapter 4]{Cox}.

We first choose an element $y\in R_G$ such that $(R_G)_y$ is a factorial ring. Then, by Nagata's Theorem \cite[Corollary 7.2]{Nag}, we deduce that the divisor class group $\Cl (R_G)$ is generated by the classes of the minimal prime ideals of $y$.

For a vertex $i$ of $G$, we denote by $G \setminus \{i\}$ the subgraph of $G$ obtained by removing the vertex $i$ together with all the edges which are incident to $i$. For the next lemma we need some notation. For $i\in V(G)$ we denote by $N(i)$ the set of all the neighbors of $i$, that
is, $N(i)=\{a\in V(G) : \{a,i\}\in E(G)\}$, and for each $a\in N(i),$ we set $E_a^i=\{x_{ai},x_{ia}\}.$

\begin{Lemma}\label{minprimes}
Let $\{i,j\}$ with $i<j$ be an edge of $G$. Then:
\begin{enumerate}
\item[{\em(a)}] $(P_G , x_{ji})$ is an unmixed radical ideal with $\height (P_G, x_{ji}) = \height P_G +1$.
\item[{\em(b)}] The set of the minimal primes of $(P_G, x_{ji})$ is $\mathcal{C}_1 (i,j) \cup \mathcal{C}_2 (i,j)$, where
\[
\mathcal{C}_1 (i,j) = \{ (P_{G \setminus \{i\}}, x_{ii}, x_{ji}, T)\} \]
 where $T \text{ is any  set of variables with }T\subset\bigcup\limits_{a\in N(i)\setminus\{j\}}E_a^i \text{ and }|T\cap E_a^i|=1 $ for all $ a\in N(i)\setminus\{j\}$, and
\[
\mathcal{C}_2 (i,j) = \{ (P_{G \setminus \{j\}}, x_{jj}, x_{ji}, U)\}
\]
where $U \text{ is any  set of variables with }U\subset\bigcup\limits_{b\in N(j)\setminus\{i\}}E_b^j \text{ and }|U\cap E_b^j|=1$  for all $ b\in N(j)\setminus\{i\}.$
\end{enumerate}
\end{Lemma}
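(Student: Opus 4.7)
The plan is to reduce $(P_G,x_{ji})$ to an explicit complete intersection by absorbing the binomial $f_{ij}$ into $x_{ii}x_{jj}$, and then to recover the minimal primes by splitting along the factorization of that product.

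For part (a), I would start from the identity
\[(P_G,x_{ji})\;=\;\bigl(P_{G'},\,x_{ii}x_{jj},\,x_{ji}\bigr),\]
where $G'$ is the graph $G$ with the single edge $\{i,j\}$ removed (this follows at once from $f_{ij}\equiv x_{ii}x_{jj}\pmod{x_{ji}}$). The reverse-lex initial monomials of these $|E(G)|+1$ generators are $\{x_{ab}x_{ba}:\{a,b\}\in E(G')\}\cup\{x_{ii}x_{jj},\,x_{ji}\}$, and because $\{i,j\}\notin E(G')$ these monomials involve pairwise disjoint sets of variables. Hence they form a regular sequence of monomials, which forces the generators themselves to form a regular sequence. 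Thus $(P_G,x_{ji})$ is a complete intersection of height $|E(G)|+1=\height(P_G)+1$. Since the initial ideal is moreover squarefree, $(P_G,x_{ji})$ is radical, and the Cohen--Macaulay property of complete intersections yields unmixedness, completing (a).

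For part (b), I would exploit the factor $x_{ii}x_{jj}$: every minimal prime $Q$ of $(P_G,x_{ji})$ must contain $x_{ii}$ or $x_{jj}$. Assuming $x_{ii}\in Q$, I would reduce each generator of $P_G$ modulo $x_{ii}$ and $x_{ji}$ to obtain
\[(P_G,x_{ji},x_{ii})\;=\;\bigl(P_{G\setminus\{i\}},\,\{x_{ia}x_{ai}\}_{a\in N(i)\setminus\{j\}},\,x_{ji},\,x_{ii}\bigr),\]
noting that $f_{ia}\equiv -x_{ia}x_{ai}\pmod{x_{ii}}$ for $a\in N(i)\setminus\{j\}$, that $f_{ij}\in(x_{ii},x_{ji})$, and that the remaining $f_{ab}$ are exactly the generators of $P_{G\setminus\{i\}}$. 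Applying Proposition~\ref{prime} to the graph $G\setminus\{i\}$, the ideal $P_{G\setminus\{i\}}$ is prime, and none of the variables $x_{ii},\,x_{ji}$ or those in $\bigcup_{a\in N(i)\setminus\{j\}}E_a^i$ occur in its generators. It follows that the minimal primes of $(P_G,x_{ji},x_{ii})$ are precisely the ideals obtained by adjoining to $P_{G\setminus\{i\}}$ the variables $x_{ii},\,x_{ji}$ together with a transversal $T$ of the pairs $E_a^i$, $a\in N(i)\setminus\{j\}$, which is exactly the description of $\mathcal{C}_1(i,j)$.

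Finally, each such ideal has height
\[|E(G\setminus\{i\})|+2+(\deg_G(i)-1)\;=\;|E(G)|+1\;=\;\height(P_G,x_{ji}),\]
so it is automatically a minimal prime of $(P_G,x_{ji})$ itself, not merely of $(P_G,x_{ji},x_{ii})$, by a dimension comparison. The case $x_{jj}\in Q$ is entirely symmetric and yields $\mathcal{C}_2(i,j)$. The main obstacle in this plan is ensuring that the transversal construction genuinely exhausts the minimal primes of $(P_G,x_{ji},x_{ii})$: this rests jointly on the primality of $P_{G\setminus\{i\}}$ supplied by Proposition~\ref{prime} and on the algebraic independence of the additional variables from that ideal, which together guarantee that the candidate ideals are prime of the claimed height.
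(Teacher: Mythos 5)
Your proposal is correct and follows essentially the same route as the paper: part (a) via the identity $(P_G,x_{ji})=(P_{G'},x_{ii}x_{jj},x_{ji})$ and the squarefree regular sequence of reverse-lex initial monomials, and part (b) by splitting on $x_{ii}$ or $x_{jj}$, rewriting $(P_G,x_{ji},x_{ii})$ as $(P_{G\setminus\{i\}},\{x_{ia}x_{ai}\},x_{ji},x_{ii})$, and concluding by the height comparison. You merely spell out details the paper leaves implicit, such as the appeal to Proposition~\ref{prime} for the primality of the candidate ideals.
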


\begin{proof}
We have $(P_G, x_{ji}) = (P_{G'}, x_{ii}x_{jj}, x_{ji})$, where $G'$ is the subgraph of $G$ obtained by removing the edge $\{i,j\}$.  Since $\ini_{\prec} (P_{G'}, x_{ii}x_{jj}, x_{ji})$ is generated by a regular sequence of squarefree monomials of length $|E(G')| +2 = |E(G)| +1$, we get (a).

(b) Obviously, by a height argument, the ideals of the two classes are minimal primes of $(P_G, x_{ji})$. Indeed, for example, for $i\in V(G)$ and any set $T$ which defines an ideal of the set $\mathcal{C}_1 (i,j),$ we have
 $\height (P_{G \setminus \{i\}}, x_{ii}, x_{ji}, T)=\height(P_{G\setminus\{i\}})+|N(i)|+1=\height P_G +1. $

   Let $Q$ be a minimal prime of $(P_G, x_{ji})$.  As $x_{ji} \in Q$, we also have $x_{ii} x_{jj} \in Q$, hence $x_{ii} \in Q$ or $x_{jj} \in Q$. Let, for instance, $x_{ii} \in Q$. Then
\[
Q\supset (P_G,x_{ii},x_{ji})=(P_{G\setminus\{i\}},x_{ii},x_{ji},\{x_{ia}x_{ai}\ |\ a\in N(i)\setminus\{j\}\}).
\]
 Now,  one easily sees that $Q$ contains one of the ideals of the class $\mathcal{C}_1 (i,j)$, and since $\height Q =\height P_G +1$, $Q$ must be equal to one of the ideals of the set $\mathcal{C}_1 (i,j)$.
\end{proof}

Let $y = \prod\limits_{\{i,j\} \in E(G) \atop i<j} \overline{x}_{ji}\in S/ P_G$, and let $\overline{Q} = Q/ P_G \subset S/ P_G$ be a minimal prime of $y$. Then $Q$ is a minimal prime of $(P_G, x_{ji})$ for some  $\{ i,j\} \in E(G), i<j$. Thus $Q$ belongs either to $\mathcal{C}_1 (i,j)$ or to $\mathcal{C}_2 (i,j)$. Therefore, the set $\Min(y)$ of the minimal primes of $(y)$ consists of all ideals $\overline{Q} \subset S/P_G$, where $Q \in  \bigcup_{\{i,j\}\in E(G)} \mathcal{C}_1 (i,j) \cup \mathcal{C}_2 (i,j)$.

In order to determine the cardinality of the set $\bigcup_{\{i,j\}\in E(G)} \mathcal{C}_1 (i,j) \cup \mathcal{C}_2 (i,j),$ we observe that it is enough to count how many prime ideals $Q$ contain $x_{ii}$ for each $i\in V(G).$ But this is easy, since such an ideal $Q$ is determined by a set $T\subset \bigcup_{a\in N(i)}E_a^i$ with $|T\cap E_a^i|=1$ for all $a\in N(i)$ and with the property that for at least one variable  $x_{cd}\in T$ we have $c>d.$
 Therefore, there are $2^{\deg i} -1$ minimal primes of $y$ which contain $\overline{x}_{ii}$. Consequently, we have
\[
|\Min (y)| = \sum_{i \in V(G)} (2^{\deg i} -1) = \sum_{i \in V(G)} 2^{\deg i} - n.
\]

\begin{Theorem}\label{classgroup}
The class group $\Cl(R_G)$ is free of rank $ \sum_{i \in V(G)} 2^{\deg i} - n - |E(G)|$.
\end{Theorem}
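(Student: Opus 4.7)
The plan is to apply Nagata's theorem through the exact sequence
\[
0\to (R_G)_y^*/R_G^*\xrightarrow{\,\div\,}\bigoplus_{P\in\Min(y)}\ZZ[P]\to\Cl(R_G)\to\Cl((R_G)_y)\to 0
\]
associated with the localization $R_G\to (R_G)_y$ at $y=\prod_{\{i,j\}\in E(G),\,i<j}\overline{x}_{ji}$. The count $|\Min(y)|=\sum_{i\in V(G)}2^{\deg i}-n$ is already in place, so the remaining tasks are: (a) show $(R_G)_y$ is factorial so that $\Cl((R_G)_y)=0$; (b) compute $(R_G)_y^*/R_G^*$; and (c) exhibit a retraction of $\div$, so that the cokernel is free.

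For (a) and (b), I would use the defining relation $\overline{x}_{ii}\overline{x}_{jj}=\overline{x}_{ij}\overline{x}_{ji}$ to solve $\overline{x}_{ij}=\overline{x}_{ii}\overline{x}_{jj}\overline{x}_{ji}^{-1}$ inside $(R_G)_y$. Combined with the algebraic independence of the remaining matrix entries in $R_G$ (a consequence of the reverse-lex initial ideal computation of Section~\ref{GbasisDiag}), this identifies $(R_G)_y$ with the Laurent polynomial ring
\[
K[\{\overline{x}_{kk}\}_{k=1}^{n},\{\overline{x}_{ji}^{\pm 1}:\{i,j\}\in E(G),\,i<j\},\{\overline{x}_{uv}:u\neq v,\,\{u,v\}\notin E(G)\}],
\]
a UFD whose unit group equals $K^*\cdot\langle\overline{x}_{ji}:\{i,j\}\in E(G),\,i<j\rangle$. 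Since $R_G$ is positively graded with $(R_G)_0=K$, one has $R_G^*=K^*$; hence $(R_G)_y^*/R_G^*$ is free abelian of rank $|E(G)|$, generated by the classes of the $\overline{x}_{ji}$'s, whose images under $\div$ are $\sum_{P\in\mathcal{C}_1(i,j)\cup\mathcal{C}_2(i,j)}[P]$ by Lemma~\ref{minprimes}(a).

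The main step is (c). For each edge $\{i,j\}$ with $i<j$, I would single out the minimal prime $P_{ij}\in\mathcal{C}_1(i,j)$ obtained from the purely above-diagonal choice
\[
T_{ij}=\{x_{ia}:a\in N(i)\setminus\{j\},\,a>i\}\cup\{x_{ai}:a\in N(i)\setminus\{j\},\,a<i\}.
\]
The only below-diagonal matrix entry in $P_{ij}$ is then $x_{ji}$, and $x_{ii}\in P_{ij}$ while $x_{jj}\notin P_{ij}$. Comparing with the enumeration in Lemma~\ref{minprimes}(b) forces $P_{ij}\in\mathcal{C}_1(k,l)\cup\mathcal{C}_2(k,l)$ precisely when $\{k,l\}=\{i,j\}$; in particular, the $P_{ij}$ are pairwise distinct. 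The projection
\[
\pi\colon\bigoplus_{P\in\Min(y)}\ZZ[P]\longrightarrow\ZZ^{|E(G)|},\quad [P_{ij}]\mapsto e_{ij},\ [P]\mapsto 0\text{ otherwise},
\]
is therefore a well-defined retraction of $\div$, yielding a splitting $\bigoplus_{P}\ZZ[P]\cong\div((R_G)_y^*/R_G^*)\oplus\ker(\pi)$. Consequently $\Cl(R_G)\cong\ker(\pi)$ is free of rank $|\Min(y)|-|E(G)|=\sum_{i\in V(G)}2^{\deg i}-n-|E(G)|$. The main obstacle is step (c): matching the explicit form of $P_{ij}$ against all the classes $\mathcal{C}_1(k,l)$, $\mathcal{C}_2(k,l)$ requires careful inspection of which variables belong to each prime.
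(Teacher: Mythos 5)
Your proposal is correct and follows essentially the same route as the paper: localize at $y=\prod\overline{x}_{ji}$, show $(R_G)_y$ is factorial, invoke Nagata's theorem, and use the count $|\Min(y)|=\sum_{i}2^{\deg i}-n$ together with the fact that the units of $(R_G)_y$ modulo $K^*$ are generated by the $\overline{x}_{ji}$. The only genuine addition is your step (c): the explicit retraction built from the distinguished primes $P_{ij}$ (the all\hyphenation{}-above-diagonal choice of $T$), which lie in $\mathcal{C}_1(k,l)\cup\mathcal{C}_2(k,l)$ only for $\{k,l\}=\{i,j\}$, supplies precisely the justification for the freeness conclusion that the paper compresses into the remark that one may ``express one class $\cl(p)$ as a combination of the others.''
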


\begin{proof}
We first notice that in $(R_G)_y$ we have ${\bar x}_{ij} = {\bar x}_{ii} {\bar x}_{jj} {\bar x}_{ji}^{-1}$ for any edge $\{i,j\}$ of $G$ with $i<j$. Therefore,
\[(R_G)_y \iso K[\{x_{ij}: i,j\in [n]\}\setminus\{x_{ij}: \{i,j\}\in E(G), i<j\}]_z,\] where $z={\prod\limits_{\{i,j\}\in E(G)\atop i<j}x_{ji}}$,
which shows that $(R_G)_y$ is a factorial ring. By Nagata's Theorem, it follows that the class group $\Cl (R_G)$ is generated by the classes of the
minimal primes of $y$. By Lemma~\ref{minprimes}, we get the following relations in $\Cl (R_G)$:

\begin{eqnarray}\label{Cleq}
\sum_{p \in \Min(\overline{x}_{ji})} \cl(p) = 0.
\end{eqnarray}

We show that all the relations between the classes of the minimal primes of $y$ are linear combinations of the relations~(\ref{Cleq}). Indeed, suppose
that $\sum_{q \in \Min(y)} m_{q} \cl(q) = 0$ for some integers $m_q.$

This implies that $\sum_{q \in \Min(y)} m_{q} \div(q) = \div(g)$, where $\div(g)$ is a principal divisor in $R_G$. Since, $\div(q)$ are in the kernel of the homomorphism $\Div(R_G) \rightarrow \Div((R_G)_y)$, it follows that $g$ is a unit of $(R_G)_y$, hence $g= \lambda {\prod_{\{i,j\} \in E(G)} \overline{x}_{ji}^{n_{ji}}}$, for some integers $n_{ji}$ and $\lambda \in K \setminus \{0\}$. Hence, we get $\sum_{q \in \Min(y)} m_q \div(q) = \sum_{\{i,j\} \in E(G)} n_{ij} \div(\overline{x}_{ji}) = \sum_{\{i,j\} \in E(G)} n_{ij} (\sum_{p \in \Min(\overline{x}_{ji}) }\div(p))$, thus $\sum_{q \in \Min(y)} m_q \cl(q)$ is a combination of relations of type (\ref{Cleq}) with coefficients $n_{ij}$. Then, by using the relations (\ref{Cleq}) for each $\{i,j\} \in E(G)$, we may express one class $\cl(p)$ where $p \in \Min(\overline{x}_{ji})$ as a combination of the others, and we get the statement of the theorem.
\end{proof}

Now, we would like to answer the following question. Given a connected graph $G$ with $m$ edges, which are the bounds for the rank of the group $\Cl(R_G)$? The answer is given in the following

\begin{Proposition} \label{bound}
Let $G$ be a connected graph with $m$ edges and let $R_G = S/ P_G$, where $P_G$ is the binomial  ideal associated with $G$. Then
\[
2m-1 \leq \rank \Cl(R_G) \leq 2^m -1.
\]
Moreover, $\rank \Cl(R_G) = 2m-1$ if and only if $G$ is the line graph and $\rank \Cl(R_G) =2^m-1$ if and only if $G$ is the star graph.
\end{Proposition}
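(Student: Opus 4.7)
The plan is to combine the exact formula $\rank \Cl(R_G) = \sum_{i \in V(G)} 2^{\deg i} - n - m$ provided by Theorem~\ref{classgroup} with a double-counting identity expressed in terms of ``stars'' of edges. Call a subset $T \subseteq E(G)$ a \emph{star} if some vertex of $G$ is incident to every edge of $T$, and let $s(G)$ denote the number of such $T$ with $|T|\geq 2$. The key reformulation I would first establish is
\[
\rank \Cl(R_G) = m + s(G).
\]
This follows by writing $\sum_i 2^{\deg i} = \sum_{T \subseteq E(G)} |\{i\in V(G) : T \subseteq E(i)\}|$, where $E(i)$ denotes the set of edges incident to $i$: the empty $T$ contributes $n$, singletons contribute $2m$ in total, and a subset with $|T|\geq 2$ contributes $1$ if $T$ is a star and $0$ otherwise, because two distinct vertices cannot both be incident to two distinct edges of a simple graph.

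For the upper bound, I would use the trivial estimate $s(G)\leq 2^m-m-1$, namely the total number of subsets of $E(G)$ of size at least $2$. Equality forces every such subset to be a star; taking $T=E(G)$ shows in particular that all edges of $G$ share a common vertex, i.e.\ $G$ is a star, and the converse is immediate since every subset of edges of a star graph is a star.

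For the lower bound, I would discard stars of size $\geq 3$ and estimate $s(G) \geq \sum_i \binom{\deg i}{2}$. The right-hand side is precisely the number of edges of the classical line graph $L(G)$, whose vertex set is $E(G)$ with adjacency given by sharing a common $G$-vertex (this should not be confused with the path graphs named in the statement). Since $G$ is connected with at least one edge, $L(G)$ is connected on $m$ vertices, hence has at least $m-1$ edges, yielding $\rank \Cl(R_G) \geq 2m-1$. I expect the equality analysis to be the main delicate step: $s(G)=m-1$ forces both that there are no stars of size $\geq 3$ (so every vertex has degree at most $2$) and that $L(G)$ is a tree on $m$ vertices. The degree condition reduces a connected $G$ to either a path or a cycle, and the tree condition rules out cycles because $L(C_m)=C_m$ has $m$ rather than $m-1$ edges, leaving only the path as the equality case.
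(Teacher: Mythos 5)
Your proof is correct, and it takes a genuinely different route from the paper. The paper argues by induction on $m$: removing an edge $\{i,j\}$ yields the recursion $\rank\Cl(R_G)=\rank\Cl(R_{G'})+2^{\deg i-1}+2^{\deg j-1}-1$, from which both bounds and both equality characterizations are extracted (for the upper bound the paper first shows that a graph with no vertex of degree $m$ has rank strictly below $2^m-1$). Your argument instead converts the formula of Theorem~\ref{classgroup} into the closed form $\rank\Cl(R_G)=m+s(G)$ by double counting pairs $(i,T)$ with $T\subseteq E(i)$, using simplicity of $G$ to see that a set of at least two edges has at most one common vertex; I checked the identity and the subsequent estimates, including the equality analyses (maximum degree $\le 2$ plus $L(G)$ a tree excludes the cycle since $L(C_m)=C_m$), and they are sound. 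What your approach buys is a transparent combinatorial meaning for the rank --- $m$ plus the number of ``concurrent'' edge subsets of size at least two --- which makes both extremal cases, and also the value $2m$ for the cycle noted in the paper's remark, immediate without induction; the bound $s(G)\ge |E(L(G))|\ge m-1$ via connectedness of the line graph $L(G)$ is a nice touch. The paper's induction is more self-contained (no auxiliary graph $L(G)$) and generalizes the lower bound to arbitrary, not necessarily connected, graphs in its first step, but your version is arguably cleaner for the equality characterizations. One pedantic point: your upper-bound equality argument takes $T=E(G)$, which requires $m\ge 2$; for $m=1$ both bounds coincide and the single edge is simultaneously the path and the star, so nothing is lost.
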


\begin{proof}
 We show, by induction on $m,$ that if $G$ is an arbitrary graph with $m$ edges, then $\rank \Cl (R_G) \geq 2m-1$. For $m=1$, the claim is obvious. Let $m > 1$ and $G$ a graph with $m$ edges. We remove one edge $\{i,j\}$ of $G$ and let $G'$ be the new graph. It is clear that at least one of the vertices $i$ and $j$ has degree $\geq 2$, since $G$ is connected. Then,
\[
\rank\Cl(R_G) = \sum_{a \in V(G)} (2^{\deg a} -1) -m
\]
\[
= [\sum_{{a \in V(G),\ }\atop{a \neq i,j}} (2^{\deg a} -1) + (2^{\deg i -1} -1 ) + (2^{\deg j -1} -1 ) - (m-1)] + 2^{\deg i -1} + 2^{\deg j -1} -1
\]
\[
=\rank\Cl(R_{G'} )+ 2^{\deg i -1} + 2^{\deg j -1} - 1 \geq 2m -3 +2 = 2m-1.
\]
We show  that the only graph with $m$ edges for which $\rank \Cl(R_G) = 2m-1$ is the line graph with $m+1$ vertices, by induction on $m.$ The step $m=1$ is clear. Let now $m>1$ and assume that $\rank \Cl(R_G) = 2m-1$. Then, by the above inequalities we get
\[
2m-1= \rank\Cl(R_{G'} )+ 2^{\deg i -1} + 2^{\deg j -1} - 1 \geq 2m -3 +2 = 2m-1.
\] Therefore, we must have $\rank\Cl(R_{G'} )=2m-3,$ thus $G^\prime$ is a line, by induction, and one of the vertices $i,j$ has degree $1$ and the other one has degree $2,$ which yields the desired conclusion.

For proving the inequality $\rank \Cl(R_G) \leq 2^m -1$, we proceed by induction on $m$. More precisely, we first show that if $G$ has no vertex of degree $m$, then $\rank \Cl(R_G)< 2^m-1.$ As in the first part of the proof, we remove an edge of $G$, let us say $\{i,j\}$. Then
\[
\rank\Cl (R_G) = \rank \Cl(R_{G'}) + 2^{\deg i -1} + 2^{\deg j -1} -1
\]
\[\leq 2^{m-1} -1 + 2^{\deg i -1} + 2^{\deg j -1} -1 < 2^m -1,
\]
since $G$ has no vertex of degree $m$. Indeed, we get the last inequality as follows
\[
2^{m-1} + 2^{\deg i -1} + 2^{\deg j -1} -2 \leq 2^{m-1} + 2\cdot 2^{m-2} -2 = 2^m -2 < 2^m -1.
\]
The upper bound for $\rank \Cl (R_G)$ is clearly reached by the star graph, that is the graph with the edges $\{1,2\}, \{1,3\}, \ldots, \{1,m\}, \{1, m+1\}$, which is the only one which has a vertex of degree $ m$.
\end{proof}

\begin{Remark}{\em
One may easily see that, in general, not every integer between $2m-1$ and $2^m-1$ can be the rank of $\Cl(R_G)$ for some connected graph $G$ with
$m$ edges. For instance, for $m=4$, the possible ranks of $\Cl(R_G)$ are $7,8,9,10$ and $15.$ }
\end{Remark}

\begin{Remark}{\em
Note that if $G$ is the cycle with $m$ edges, then $\rank \Cl(R_G)=2m.$ Therefore, for each positive integer $n,$ one may find a graph $G$
such that $\Cl(R_G)$ is a free group of $\rank \Cl(R_G)=n$. Indeed, if $n=2m-1,$ we may take $G$ to be the line graph with $m$ edges, and if $n=2m$, we may take the cycle with $m$ edges.
}
\end{Remark}

%\newpage
{}

\end{document}